\newtheorem{assumption}{ {Assumption}}
\newtheorem{theorem}{ {Theorem}}
\newtheorem{lemma}{ {Lemma}}
\newtheorem{definition}{ {Definition}}
\newtheorem{remark}{ {Remark}}
\newcommand{\sign}[1]{\mathrm{sgn}(#1)}
\title{\LARGE \bf
Continuous-Time and Event-Triggered Online Optimization for Linear Multi-Agent Systems
}
\author{Yang Yu, Xiuxian Li, \IEEEmembership{Senior Member, IEEE}, Li Li, and Lihua Xie, \IEEEmembership{Fellow, IEEE}
\thanks{Yang Yu, Xiuxian Li, and Li Li are with the Department of Control Science and Engineering, Shanghai Research Institute for Autonomous Intelligent Systems, and Shanghai Institute of Intelligent Science and Technology, Tongji University, Shanghai, China {\tt\small \{1910639, xli, lili\}@tongji.edu.cn}}
\thanks{Lihua Xie is with the School of Electrical and Electronic Engineering, Nanyang Technological University, Singapore 639798 {\tt\small elhxie@ntu.edu.sg}}}
\begin{document}

\maketitle
\thispagestyle{empty}
\pagestyle{empty}

\begin{abstract}

This paper studies the decentralized online convex optimization problem for heterogeneous linear multi-agent systems. Agents have access to their time-varying local cost functions related to their own outputs, and there are also time-varying coupling inequality constraints among them. The goal of each agent is to minimize the global cost function by selecting appropriate local actions only through communication between neighbors. We design a distributed controller based on the saddle-point method which achieves constant regret bound and sublinear fit bound. In addition, to reduce the communication overhead, we propose an event-triggered communication scheme and show that the constant regret bound and sublinear fit bound are still achieved in the case of discrete communications with no Zeno behavior. A numerical example is provided to verify the proposed algorithms.


\end{abstract}

\section{INTRODUCTION}

Convex optimization has been widely studied as a pretty effective method in research fields involving optimization and decision-making, such as automatic control systems \cite{ran1996}, communication networks \cite{luo2003}, and machine learning \cite{qiu2016}. 
Early convex optimization works were based on fixed cost functions and static constraints.
However, in practice, optimization costs and constraints of many problems are possible to be time-varying and a priori unknown \cite{ss2011}. This motivated  online convex optimization (OCO) which requires the decision maker to choose an action at each instant based on previous information. 
A widely used performance criterion of OCO is regret, that is, the gap between the cumulative loss of the selected action and that of the best ideal action made when knowing the global information beforehand. If regret is sublinear, the time average loss of the selected action is progressively not greater than that of the ideal action. Another performance indicator is fit, which measures the degree of violation of static/time-varying inequality constraints. For more details, a recent survey can be referenced \cite{li2022sur}.

The OCO framework was introduced by \cite{zink2003}, where the projection-based online gradient descent algorithm was analyzed. Based on static constraints, the algorithm was proved to achieve $\mathcal{O}(\sqrt{T})$ static regret bound for time-varying convex cost functions with bounded subgradients. 
With the increase of data scale and problem complexity in recent years, distributed online convex optimization has also been widely studied in recent years \cite{zhou2018}. In the continuous-time setting, the saddle-point algorithm proposed in \cite{lee2016} under constant constraints is shown to achieve sublinear bounds on the network disagreement and the regret achieved by the strategy. The authors of \cite{pater2020} generalized this result to the problem of time-varying constraints.
In the discrete-time setting, \cite{yuan2018, lxx2021} used distributed primal-dual algorithms to solve online convex optimization with static independent and coupled inequality constraints. In order to solve the time-varying coupling constraints, the authors of \cite{yi2020} proposed a novel distributed online primal dual dynamic mirror descent algorithm to realize sublinear dynamic regret and constraint violation. 
A gradient-free distributed bandit online algorithm was proposed in \cite{yi20212}, which is applicable to scenarios where it is difficult to obtain the gradient information of the cost functions.
In the presence of aggregate variables in local cost functions, an online distributed gradient tracking algorithm was developed in \cite{li20212} based on true or stochastic gradients.


In actual physical systems, the implementation of optimization strategies must take into account the complicated dynamics of each agent. Along this line, only a few works have investigated online convex optimization with physical systems in recent years. 
For continuous-time multi-agent systems with high-order integrators, the authors of \cite{deng2016} used PI control idea and gradient descent to solve the distributed OCO problem. The authors of \cite{non2020} considered the online convex optimization problem of linear systems, but did not consider any constraints. The online convex optimization problem of linear time-invariant (LTI) system was studied in \cite{non2021} based on the behavioral system theory, where a proposed data-driven algorithm that does not rely on the model achieves sublinear convergence. However, the above two papers for linear systems only provide centralized algorithms. The distributed setup for online optimization algorithm with linear systems is yet to be studied. 

The main contributions of this paper are as follows.
\begin{itemize}
	\item Compared with the centralized OCO algorithms for linear systems with no constrains \cite{non2020,non2021}, this paper studies the distributed online optimization of heterogeneous multi-agent systems with time-varying coupled inequality constraints for the first time. Agents only rely on the information of themselves and their neighbors to make decisions and achieve constant regret bound and $\mathcal{O}(\sqrt{T})$ fit bound. In comparison, most existing algorithms \cite{yi2020,li2022} about distributed online optimization with coupled inequality constraints only achieve inferior sublinear regret bounds.
	\item Compared with current continuous-time online optimization algorithms \cite{deng2016, lee2016, pater2020}, this paper introduces an event-triggered mechanism to reduce the communication overhead. In the case of discrete communication, the constant regret bound and $\mathcal{O}(\sqrt{T})$ fit bound are still achieved.
\end{itemize}

The rest of the paper is organized as follows. Preliminaries are given in Section \ref{pre}. In Section \ref{pro}, the heterogeneous multi-agent system under investigation is described mathematically, the online convex optimization problem is defined and some useful lemmas are given. Following that, the control laws with continuous and event-triggered communication are proposed, respectively, and the constant regret bound and sublinear fit bound are established in Section \ref{mai}. Then, a simulation example is provided to verify the effectiveness of the algorithm in Section \ref{sim}. Finally, the conclusion is discussed in Section \ref{res}.

\section{PRELIMINARIES}\label{pre}

\subsection{Notations}
Let $\mathbb{R}$, $\mathbb{R}^n$, $\mathbb{R}_+^n$, $\mathbb{R}^{m\times n}$ be the sets of  real numbers, real vectors of dimension $n$, non-negative real vectors of dimension $n$, and real matrices of dimension $m\times n$, respectively.
The $n\times n$ identity matrix is denoted by $I_n$. The $n\times 1$ all-one and all-zero column vectors are denoted by $\boldsymbol1_n$ and $\boldsymbol0_n$, respectively.
For a matrix $A\in\mathbb{R}^{m \times n}$, $A^\top $ is its transpose and  $diag(A_1,\dots,A_n)$ denotes a block diagonal matrix with diagonal blocks of $A_1$, $\dots$, $A_n$.
For a vector $x$, $\|x\|_1$ is its $1$-norm, $\|x\|$ is its $2$-norm, and $col(x_1,\dots,x_n)$ is a column vector by stacking vectors $x_1,\dots,x_n$. $A\otimes B$ represents the Kronecker product of matrices $A$ and $B$. 
Let $P_\mathcal{S}(x)$ be the Euclidean projection of a vector $x\in \mathbb{R}^n$ onto the set $\mathcal{S} \subseteq \mathbb{R}^n$, i.e., 
$P_\mathcal{S}(x)=argmin_{y\in\mathcal{S}}\|x-y\|^2$. For simplicity, let $[\cdot]_+$ denote $P_{\mathbb{R}_+^n}(\cdot)$. Define the set-valued sign function $\mathrm{sgn}(\cdot)$ as follows:
\begin{align*}
	\sign{x}:=\partial\|x\|_1=
	\begin{cases}
		-1,&\mathrm{if}\  x<0,\\
		[-1,1],&\mathrm{if}\  x=0,\\
		1,&\mathrm{if}\  x>0.
	\end{cases}
\end{align*}

\subsection{Graph Theory}
For a system with $N$ agents, its communication network is modeled by an undirected graph $\mathcal{G} = (\mathcal{V}, \mathcal{E})$, where $\mathcal{V} = \{v_1, \dots, v_N\}$ is a node set and $\mathcal{E} \in \mathcal{V} \times \mathcal{V}$ is an edge set. If information exchange can occur between $v_j$ and $v_i$, then $(v_j,v_i) \in \mathcal{E}$ with $a_{ij}=1$ denoting its weight.  $\mathcal{A}= [a_{ij}]\in \mathbb{R}^{N \times N}$ is the adjacency matrix. If there exists a path from any node to any other node in $\mathcal{V}$, then $\mathcal{G}$ is called connected.


\section{PROBLEM FORMULATION}\label{pro}

Consider a multi-agent system consisting of $N$ heterogeneous agents indexed by $i=1, \dots, N$, and the $i$th agent has following linear dynamics:
\begin{align} 
	\begin{split}
		{\dot x}_i&=A_i x_i + B_i u_i,\\
		y_i&=C_i x_i,\label{eqsys}
	\end{split}
\end{align}
where $x_i\in\mathbb{R}^{n_i}$, $u_i\in\mathbb{R}^{m_i}$ and $y_i\in\mathbb{R}^{p_i}$ are the state, input and output variables, respectively. $A_i\in\mathbb{R}^{n_i \times n_i}$, $B_i\in\mathbb{R}^{n_i \times m_i}$ and $C_i\in\mathbb{R}^{p_i \times n_i}$ are the state, input and output matrices, respectively.

Each agent $i$ has an output set $\mathcal{Y}_i\subseteq\mathbb{R}^{p_i}$ such that the output variable $y_i\in\mathcal{Y}_i$. $f_i(t,\cdot): \mathbb{R}^{p_i}\to \mathbb{R}$ and $g_i(t,\cdot): \mathbb{R}^{p_i}\to \mathbb{R}^{q}$ are the private cost and constraint functions for agent $i$.
Denote $p:=\sum_{i=1}^{N}p_i$, $\mathcal{Y}:=\mathcal{Y}_1\times\dots \times \mathcal{Y}_N \subseteq \mathbb{R}^{p}$, $y:=col(y_1,\dots, y_N)\in\mathbb{R}^p$, and $f(t,y):=\sum_{i=1}^{N}f_{i}(t, y_i)$.
The objective of this paper is to design a controller $u_i(t)$ for each agent by using only local interaction and information such that all agents cooperatively minimize the sum of the cost functions over a period of time $[0,T]$ with time-varying coupled inequality constraints: 
\begin{align}
	\begin{split}
		\min_{y \in \mathcal{Y}} \int_{0}^{T}f(t,y)\,dt ,\\
		s.t. \sum_{i=1}^{N}g_{i}(t, y_i)\le \boldsymbol0.\label{question2}
	\end{split}
\end{align}

Let $y^*=(y_1^*,\dots,y_N^*)\in \mathcal{Y}$ denote the optimal solution for problem (\ref{question2}) when the time-varying cost and constraint functions are known in advance.  

In order to evaluate the cost performance of such output trajectories, we define two performance indicators: network regret and network fit. According to the previous definition, $y^*$ is the optimal output when the agents know all the information of network in the period of $[0, T]$. But in reality, agents can only make decisions based on their own and neighbors' current and previous information. Regret is described as the gap between the cumulative action $\int_{0}^{T} f(t,y(t))\,dt$ incurred by $y(t)$ and the cost incurred by the optimal output $y^*$, i.e.,
\begin{align}
	\mathcal{R}^T:=\int_{0}^{T} \Big(f(t,y(t))-f(t,y^*)\Big)\,dt\label{regert}.
\end{align}

In order to evaluate the fitness of output trajectories $y(t)$ to the constraints (or in other words, the degree of violation of the constraints), we define fit as the projection of the cumulative constraints onto the nonnegative orthant:
\begin{align}
	\mathcal{F}^T:=\left\|\,\left[\int_{0}^{T} \sum_{i=1}^{N}g_{i}(t, y_i) \,dt\right]_+\right\|.\label{fit}
\end{align}
This definition implicitly allows strictly feasible decisions to compensate for violations of constraints at certain times. This is reasonable when the variables can be stored or preserved, such as the average power constraints \cite{pater2017}.
By $g_i(t,\cdot): \mathbb{R}^{p_i}\to \mathbb{R}^{q}$, one can define $g_{i,j}(t,\cdot): \mathbb{R}^{p_i}\to \mathbb{R}$ as the $j$th component of $g_i(t,\cdot)$, i.e., $g_i(t,\cdot) = col\left(g_{i,1}(t,\cdot), \dots, g_{i,q}(t,\cdot)\right)$. Further, define $F_j^T:=\int_{0}^{T} \sum_{i=1}^{N}g_{i,j}(t, y_i)\,dt, j=1,\dots,q$ as the $j$th component of the constraint integral. It can be easily deduced that
\begin{align}
	\mathcal{F}^T=\sqrt{\sum_{j=1}^q\left[F_j^T\right]_+^2}.\notag
\end{align}
\begin{assumption} \label{asp1}
	The communication network $\mathcal{G}$ is undirected and connected.	
\end{assumption}

\begin{assumption} \label{asp2}
	Each set $\mathcal{Y}_i$ is convex and compact. For $t\in [0,T]$, functions $f_i(t,y_i)$ and $g_i(t,y_i)$ are convex, integrable and bounded on $\mathcal{Y}_i$, i.e., there exist constants $K_f>0$ and $K_g>0$ such that $|f_i(t,y_i)|\le K_f$ and $\|g_i(t,y_i)\|\le K_g$.
\end{assumption}

\begin{assumption}
	The set of feasible outputs $\mathcal{Y}^\dagger:=\{y: y\in\mathcal{Y}, \sum_{i=1}^{N}g_{i}(t, y_i)\le 0, t\in[0,T]\}$ is non-empty.
\end{assumption}	

\begin{definition} [\cite{zhang1995}]
	Let $\mathcal{S} \subseteq \mathbb{R}^n$ be a closed convex set. Then, for any $x \in \mathcal{S}$ and $v\in\mathbb{R}^n$, the projection of $v$ over set $\mathcal{S}$ at the point $x$ can be defined as
	\begin{align}
		\Pi_\mathcal{S} [x,v] = \lim\limits_{\xi\to0^+} \frac{P_\mathcal{S}(x+\xi v)-x}{\xi}.\notag
	\end{align}
\end{definition}

\begin{lemma} [\cite{pater2017}]
	Let $\mathcal{S} \subseteq \mathbb{R}^n$ be a convex set and let $x, y\in \mathcal{S}$, then
	\begin{align}
		(x-y)^\top \Pi_{\mathcal{S}}(x,v)\le (x-y)^\top v, \forall v\in \mathcal{S}.\label{eqle2}
	\end{align}
\end{lemma}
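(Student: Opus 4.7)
The plan is to reduce the claim to the standard variational inequality characterizing the Euclidean projection onto a closed convex set: for every $z\in\mathbb{R}^n$ and every $y\in\mathcal{S}$,
$$(P_{\mathcal{S}}(z)-y)^\top(z-P_{\mathcal{S}}(z))\ge 0.$$
I would apply this inequality at the perturbed point $z=x+\xi v$ for small $\xi>0$, rearrange so that the difference quotient $w_\xi:=(P_{\mathcal{S}}(x+\xi v)-x)/\xi$ appears, and then let $\xi\to 0^+$, at which point $w_\xi\to\Pi_{\mathcal{S}}[x,v]$ by the definition given just before the lemma (note that $P_{\mathcal{S}}(x)=x$ since $x\in\mathcal{S}$).

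Concretely, substituting $z=x+\xi v$ and rewriting $x+\xi v-P_{\mathcal{S}}(x+\xi v)=\xi v-(P_{\mathcal{S}}(x+\xi v)-x)$ yields
$$\bigl((P_{\mathcal{S}}(x+\xi v)-x)+(x-y)\bigr)^\top\bigl(\xi v-(P_{\mathcal{S}}(x+\xi v)-x)\bigr)\ge 0.$$
Dividing by $\xi>0$ and collecting terms then produces
$$\xi\,w_\xi^\top(v-w_\xi)+(x-y)^\top(v-w_\xi)\ge 0.$$

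Next I would pass to the limit $\xi\to 0^+$. The second term converges to $(x-y)^\top(v-\Pi_{\mathcal{S}}[x,v])$ by the definition of $\Pi_{\mathcal{S}}$. The first term vanishes because $P_{\mathcal{S}}$ is $1$-Lipschitz, which gives the uniform bound $\|w_\xi\|\le\|v\|$; the explicit prefactor $\xi$ then forces the product to zero. The resulting inequality $(x-y)^\top(v-\Pi_{\mathcal{S}}[x,v])\ge 0$ rearranges to the claim.

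I expect the only delicate step to be the limit passage itself: one needs the directional limit $\Pi_{\mathcal{S}}[x,v]=\lim_{\xi\to 0^+}(P_{\mathcal{S}}(x+\xi v)-x)/\xi$ to actually exist, which is a standard fact about the right-directional differentiability of the metric projection onto a closed convex set. Everything else is routine algebra on the projection inequality.
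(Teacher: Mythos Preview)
Your argument is correct. The paper does not actually prove this lemma; it merely cites it from \cite{pater2017} as a known fact, so there is nothing to compare your approach against. Your route---applying the projection variational inequality at $z=x+\xi v$, isolating the difference quotient $w_\xi$, bounding $\|w_\xi\|\le\|v\|$ via nonexpansiveness of $P_{\mathcal{S}}$, and passing to the limit using the definition of $\Pi_{\mathcal{S}}[x,v]$---is the standard proof and goes through as written. The only implicit hypothesis you rely on that the lemma's statement omits is closedness of $\mathcal{S}$ (needed for $P_{\mathcal{S}}$ to be well defined), but the paper's Definition~1 already assumes this, so there is no real gap.
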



\begin{assumption} \label{asp4}
	$(A_i,B_i)$ is controllable, and
	\begin{align}
		rank(C_iB_i)=p_i, i=1,\dots,N.\notag
	\end{align}
\end{assumption}

\begin{lemma} [\cite{yu2021}]\label{lemma3}
	Under Assumption \ref{asp4}, the matrix equations
	\begin{subequations}\label{eqle3}
		\begin{align}
			C_iB_iK_{\alpha_i}&=C_iA_i,\label{eqle31}\\
			C_iB_iK_{\beta_i}&=I_{p_i},\label{eqle32}
		\end{align}
	\end{subequations}
	have solutions $K_{\alpha_i}$, $K_{\beta_i}$.
\end{lemma}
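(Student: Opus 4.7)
The plan is to reduce both matrix equations in (\ref{eqle3}) to the solvability of a single linear system of the form $C_iB_i X = Y$, and then invoke a standard right-inverse construction that is available precisely because of the rank hypothesis in Assumption \ref{asp4}.

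First I would note that $C_iB_i \in \mathbb{R}^{p_i\times m_i}$ has full row rank $p_i$. A basic fact in linear algebra is then that the Gram-type matrix $(C_iB_i)(C_iB_i)^\top \in \mathbb{R}^{p_i\times p_i}$ is symmetric positive definite, hence invertible: indeed, if $v^\top (C_iB_i)(C_iB_i)^\top v = \|(C_iB_i)^\top v\|^2 = 0$, then $(C_iB_i)^\top v = 0$, which forces $v = 0$ because $C_iB_i$ has full row rank. This justifies defining the right inverse
\begin{align*}
(C_iB_i)^{\dagger} := (C_iB_i)^\top\bigl((C_iB_i)(C_iB_i)^\top\bigr)^{-1},
\end{align*}
which satisfies $C_iB_i\,(C_iB_i)^{\dagger} = I_{p_i}$ by direct computation.

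With this tool in hand, the two equations fall out immediately: setting $K_{\beta_i} := (C_iB_i)^{\dagger}$ gives (\ref{eqle32}), and setting $K_{\alpha_i} := (C_iB_i)^{\dagger}\,C_iA_i$ gives (\ref{eqle31}), since
\begin{align*}
C_iB_i K_{\alpha_i} = \bigl(C_iB_i(C_iB_i)^{\dagger}\bigr)C_iA_i = I_{p_i}\,C_iA_i = C_iA_i.
\end{align*}
Alternatively, one can argue more abstractly: since $C_iB_i$ has full row rank $p_i$, its column space is all of $\mathbb{R}^{p_i}$, so every column of $C_iA_i$ and every column of $I_{p_i}$ lies in $\mathrm{range}(C_iB_i)$, whence both systems are consistent.

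The proof is essentially a one-step linear-algebra verification, so there is no genuine obstacle; the only point requiring any care is justifying the invertibility of $(C_iB_i)(C_iB_i)^\top$, which rests entirely on the rank condition $\mathrm{rank}(C_iB_i) = p_i$ from Assumption \ref{asp4}. (The controllability of $(A_i,B_i)$ is not needed for this lemma itself; it will presumably be invoked elsewhere in the controller design.) Note also that when $m_i > p_i$ the solutions $K_{\alpha_i}, K_{\beta_i}$ are non-unique, and any particular choice — for instance the minimum-norm ones given above — suffices.
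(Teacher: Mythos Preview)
Your argument is correct and is exactly the standard right-inverse construction one expects here. Note, however, that the paper does not give its own proof of this lemma: it is quoted from \cite{yu2021} and stated without proof, so there is nothing in the paper to compare your approach against. Your write-up supplies precisely the missing justification, and your side remarks (non-uniqueness when $m_i>p_i$, irrelevance of controllability for this particular lemma) are accurate.
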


\begin{remark}
	Assumption \ref{asp2} is reasonable since the output variables in practice, such as voltage, often have a certain range. The cost functions and constraint functions are not required to be differentiable, which can be dealt with by using subgradients. The controllability in Assumption \ref{asp4} is quite standard in dealing with the problem for linear systems.
\end{remark}


\section{MAIN RESULTS}\label{mai}

\subsection{Continuous Communication}

For agent $i$, to solve the online optimization problem (\ref{question2}), we can construct the time-varying Lagrangian

\begin{align}
	\mathcal{H}_i(t,y_i,\mu_i)=f_i(t,y_i)+\mu_i^\top g_i(t,y_i) - K_\mu h_i,\label{la}
\end{align}
where $\mu_i\in\mathbb{R}_+^{q}$ is the local Lagrange multiplier for agent $i$, $K_\mu>0$ is the preset parameter and $h_i:= \sum_{j=1}^N a_{ij}\left\|\mu_i-\mu_j\right\|_1$ is a metric of $\mu_i$'s disagreement \cite{liang2018}.

Notice that $f_i(t,\cdot)$, $g_i(t,\cdot)$ are convex and $\mu_i \ge \boldsymbol 0_q$,
hence the Lagrangian is convex with respect to $y_i$. Let us
denote by $\mathcal{H}_i^{y_i}(t,y_i,\mu_i)$ a subgradient of $\mathcal{H}_i$ with respect to $y_i$, i.e.,
\begin{align}
	\mathcal{H}_i^{y_i}(t,y_i,\mu_i)\in \partial f_i(t,y_i)+\mu_i^\top \partial g_i(t,y_i).\label{lax}
\end{align}

The Lagrangian is concave with respect to $\mu_i$ and its subgradient is given by
\begin{align}
	\mathcal{H}_i^{\mu_i}(t,y_i,\mu_i)&\in g_i(t,y_i) - K_\mu\sum_{j=1}^N a_{ij}\sign{\mu_i-\mu_j}.\label{lamu}
\end{align}

For simplicity, define
\begin{align}
	\mathcal{H}(t,y,\mu):= f(t,y)+\mu^\top g(t,y) - K_\mu h(\mu),\label{lag}
\end{align}
where $y=col(y_1,\dots, y_N)$, $\mu=col(\mu_1, \dots, \mu_N)$, $f(t,y)=\sum_{i=1}^{N}f_{i}(t, y_i)$, $g(t,y)=col(g_1(t,y_1), \dots, g_N(t,y_N))$, and $h(\mu)= \sum_{i=1}^N\sum_{j=1}^Na_{ij}\left\|\mu_i-\mu_j\right\|_1$. It can be easily verified that  $\mathcal{H}(t,y,\mu)=\sum_{i=1}^{N}\mathcal{H}_i(t,y_i,\mu_i)$.

A controller following modified Arrow-Hurwicz algorithm for the $i$th agent is proposed as
\begin{subequations}\label{eqpi1}
	\begin{align}
		u_i&=-K_{\alpha_i} x_i+K_{\beta_i} \left(\Pi_{\mathcal{Y}_i}[y_i, -\varepsilon\mathcal{H}_i^{y_i}(t,y_i,\mu_i)]\right),\label{eqpi1a}\\
		\dot{\mu_i}&=\Pi_{\mathbb{R}_+^q}[\mu_i,\varepsilon\mathcal{H}_i^{\mu_i}(t,y_i,\mu_i)], \label{eqpi1b}
	\end{align}
\end{subequations}
where $\varepsilon>0$ is the step size, $K_{\alpha_i},K_{\beta_i}$ are feedback matrices that are the solutions of (\ref{eqle3}), and the initial value $\mu_i(0){=}\boldsymbol{0}$.

Substituting the controller (\ref{eqpi1}) into the system (\ref{eqsys}), the system dynamics of the $i$th agent is
\begin{subequations}\label{eqpi2}
	\begin{align}
		\dot x_i&{=}(A_i{-}B_i K_{\alpha_i})x_i{+}B_iK_{\beta_i}\left(\Pi_{\mathcal{Y}_i}[y_i, -\varepsilon\mathcal{H}_i^{y_i}(t,y_i,\mu_i)]\right),\label{eqpi2a}\\
		\dot{\mu_i}&{=}\Pi_{\mathbb{R}_+^q}[\mu_i,\varepsilon\mathcal{H}_i^{\mu_i}(t,y_i,\mu_i)] ,\label{eqpi2b}\\
		y_i&{=}C_ix_i.\label{eqpi2c}
	\end{align}
\end{subequations}

For the subsequent analysis, consider the following energy function with any $\tilde{y}\in\mathcal{Y}$ and $\tilde{\mu}\in\mathbb{R}_+^{Nq}$ :
\begin{align}
	V_{(\tilde{y}, \tilde{\mu})}(y,\mu)= \frac{1}{2}\|y-\tilde{y}\|^2 + \frac{1}{2}\|\mu-\tilde{\mu}\|^2.\label{v1}
\end{align}
The following lemma establishes the relationship between above energy function and time-varying Larangian (\ref{la}) along the dynamics (\ref{eqpi2}).

\begin{lemma}\label{lemma4}
	If Assumptions \ref{asp1}-\ref{asp4} hold and $\tilde{\mu}:= \boldsymbol1_N \otimes \gamma, \forall \gamma\in \mathbb{R}_+^q$, then for any $T\ge0$ the trajectories of linear multi-agent system (\ref{eqsys}) with control protocol (\ref{eqpi1}) satisfy
	\begin{align}
		\int_{0}^{T} \Big( \mathcal{H}(t,y,\tilde{\mu})-\mathcal{H}(t,\tilde{y},\mu) \Big) \,dt\le\frac{V_{(\tilde{y}, \tilde{\mu})}(y(0), \boldsymbol{0})}{\varepsilon}.\label{l3}
	\end{align}
\end{lemma}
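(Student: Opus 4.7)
The plan is to treat Lemma \ref{lemma4} as a continuous-time saddle-point descent argument driven by the quadratic energy function $V_{(\tilde{y}, \tilde{\mu})}$, so that its rate of decrease majorises the instantaneous primal--dual gap $\mathcal{H}(t, y, \tilde{\mu}) - \mathcal{H}(t, \tilde{y}, \mu)$.

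The first step is to reduce the physical state dynamics to a pure projected flow in the output. Premultiplying (\ref{eqpi2a}) by $C_i$ and substituting the matrix identities $C_i B_i K_{\alpha_i} = C_i A_i$ and $C_i B_i K_{\beta_i} = I_{p_i}$ from Lemma \ref{lemma3}, the $x_i$-dependent drift cancels exactly and one obtains $\dot{y}_i = \Pi_{\mathcal{Y}_i}[y_i, -\varepsilon \mathcal{H}_i^{y_i}(t, y_i, \mu_i)]$. Stacking over $i$, the closed loop can be written purely in the $(y,\mu)$ variables as $\dot{y} = \Pi_{\mathcal{Y}}[y, -\varepsilon \mathcal{H}^y]$ together with $\dot{\mu} = \Pi_{\mathbb{R}_+^{Nq}}[\mu, \varepsilon \mathcal{H}^\mu]$, which is exactly the form on which the projection inequality (\ref{eqle2}) can be brought to bear.

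Differentiating the energy along these trajectories gives $\dot{V} = (y - \tilde{y})^\top \dot{y} + (\mu - \tilde{\mu})^\top \dot{\mu}$. Since $\tilde{y}\in\mathcal{Y}$ and $\tilde{\mu}\in\mathbb{R}_+^{Nq}$, applying (\ref{eqle2}) in each block yields
\begin{align*}
\dot{V} \le -\varepsilon (y - \tilde{y})^\top \mathcal{H}^y(t,y,\mu) + \varepsilon (\mu - \tilde{\mu})^\top \mathcal{H}^\mu(t,y,\mu).
\end{align*}
Now $\mathcal{H}(t,\cdot,\mu)$ is convex (because $\mu_i\ge\boldsymbol0$ and each $f_i,g_i$ are convex) while $\mathcal{H}(t,y,\cdot)$ is concave (affine in $\mu$ minus the convex penalty $K_\mu h$). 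The subgradient inequality in $y$ gives $(y-\tilde{y})^\top \mathcal{H}^y \ge \mathcal{H}(t,y,\mu) - \mathcal{H}(t,\tilde{y},\mu)$, and the supergradient inequality in $\mu$ gives $(\mu-\tilde{\mu})^\top \mathcal{H}^\mu \le \mathcal{H}(t,y,\mu) - \mathcal{H}(t,y,\tilde{\mu})$. Adding and substituting above collapses the right-hand side to
\begin{align*}
\dot{V} \le -\varepsilon \bigl[\mathcal{H}(t,y,\tilde{\mu}) - \mathcal{H}(t,\tilde{y},\mu)\bigr].
\end{align*}

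To conclude, I would integrate from $0$ to $T$, discard the non-negative $V(T)$, and identify $V(0)$ with $V_{(\tilde{y}, \tilde{\mu})}(y(0), \boldsymbol{0})$ via the initial condition $\mu(0)=\boldsymbol{0}$, dividing by $\varepsilon$. The step I expect to require the most care is the supergradient inequality for the non-smooth concave term $-K_\mu h(\mu)$, whose set-valued $\mathrm{sgn}$-type supergradient in (\ref{lamu}) must be handled so that the single selection driving $\dot{\mu}$ also certifies the concavity bound; the hypothesis $\tilde{\mu}=\boldsymbol1_N\otimes \gamma$ is not invoked at this stage (the concavity bound holds for any $\tilde{\mu}\in\mathbb{R}_+^{Nq}$), but it is the class of comparison multipliers that will be used downstream, where $h(\tilde{\mu})=0$ is exploited to extract regret and fit bounds.
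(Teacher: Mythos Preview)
Your proposal is correct and follows essentially the same argument as the paper: reduce the output dynamics to $\dot y_i=\Pi_{\mathcal Y_i}[y_i,-\varepsilon\mathcal H_i^{y_i}]$ via the matrix identities of Lemma~\ref{lemma3}, differentiate $V_{(\tilde y,\tilde\mu)}$, apply the projection inequality (\ref{eqle2}) blockwise, then use convexity in $y$ and concavity in $\mu$ of the Lagrangian, and finally integrate and drop the nonnegative terminal value with $\mu(0)=\boldsymbol 0$. Your additional remarks---that the consensus form $\tilde\mu=\boldsymbol 1_N\otimes\gamma$ is not actually used in this lemma and that the set-valued $\mathrm{sgn}$ selection in (\ref{lamu}) deserves care---are accurate observations that the paper leaves implicit.
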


\begin{proof}
	Calculating the time derivative of the energy function (\ref{v1}) together with (\ref{eqpi2}) yields
	
	\begin{align}
		\dot{V}_{(\tilde{y}, \tilde{\mu})}
		{=} 
		&(\mu-\tilde{\mu})^\top \dot{\mu} + (y-\tilde{y})^\top \dot{y}\notag\\
		=
		&\sum_{i=1}^{N} (\mu_i\!-\!\tilde{\mu}_i)\!^{\top} \Pi_{\mathbb{R}_+^q}\big[\mu_i,\!\varepsilon\mathcal{H}_i^{\mu_i}(t,y_i,\mu_i)\big] \notag\\
		& \!+\! \sum_{i=1}^{N} \!(y_i{-}\tilde{y}_i)\!^\top \Pi_{\mathcal{Y}_i}\!\big[y_i,\! \!-\varepsilon\mathcal{H}_i^{y_i}\!(t,y_i,\mu_i)\big] \notag\\
		\le
		& \sum_{i=1}^{N} (\mu_i-\tilde{\mu}_i)^\top \Big(\varepsilon\mathcal{H}_i^{\mu_i}(t,y_i,\mu_i)\!\Big) \notag\\
		& + \sum_{i=1}^{N} (y_i-\tilde{y}_i)^\top \Big(-\varepsilon\mathcal{H}_i^{y_i} (t,y_i,\mu_i)\Big) \notag\\
		\le 
		& \varepsilon \sum_{i=1}^{N} \Big(\mathcal{H}_i(t,y_i,\mu_i) - \mathcal{H}_i(t,y_i,\tilde{\mu}_i)\Big) \notag\\
		& + \varepsilon \sum_{i=1}^{N} \Big(\mathcal{H}_i (t,\tilde{y}_i,\mu_i) - \mathcal{H}_i (t,y_i,\mu_i)\Big) \notag\\
		= 
		&\varepsilon \Big(\mathcal{H}(t,\tilde{y},\mu) - \mathcal{H}(t,y,\tilde{\mu})\Big),\label{v11}
	\end{align}
	where the second equation holds in view of (\ref{eqle3}) and (\ref{eqpi2}), the first inequality  holds because of  (\ref{eqle2}), and the last inequality holds since the Lagrangian (\ref{la}) is convex with respect to $y_i$ and concave with respect to $\mu_i$.
	
	Integrating (\ref{v11}) from $0$ to $T$ on both sides leads to that
	\begin{align}
		&\int_{0}^{T} \Big( \mathcal{H}(t,y,\tilde{\mu})-\mathcal{H}(t,\tilde{y},\mu)\Big) \,dt\notag\\
		\le&-\frac{1}{\varepsilon}\int_{0}^{T} \dot{V}_{(\tilde{y}, \tilde{\mu})} (y(t), \mu(t)) \,dt\notag\\
		=&-\frac{1}{\varepsilon}\Big(V_{(\tilde{y}, \tilde{\mu})}(y(T), \mu(T)) - V_{(\tilde{y}, \tilde{\mu})}(y(0), \mu(0))\Big).
	\end{align}
	Because the energy function (\ref{v1}) is always nonnegative and $\mu(0)=\boldsymbol{0}$, the conclusion (\ref{l3}) can be obtained.
\end{proof}

We now state the main results about the regret and fit bounds of continuous communication controller (\ref{eqpi1}).

\begin{theorem}\label{th1}
	Suppose that Assumptions \ref{asp1}-\ref{asp4} hold. Then for any $T\ge0$ and $\varepsilon>0$ in control protocol (\ref{eqpi1}), by choosing $K_\mu \ge NK_g$, it holds for the regret that
	\begin{align}
		\mathcal{R}^T\le \frac{\|y(0)\!-\!y^*\|^2}{2\varepsilon}.
	\end{align}
\end{theorem}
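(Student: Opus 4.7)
The plan is to specialize Lemma~\ref{lemma4} to the choices $\tilde{y}=y^*$ and $\gamma=\boldsymbol{0}_q$ (so that $\tilde{\mu}=\boldsymbol{0}_{Nq}$), and then to absorb the remaining multiplier-dependent slack into the consensus penalty $K_\mu h(\mu)$. Since $h(\tilde{\mu})=0$, we have
\begin{equation*}
\mathcal{H}(t,y,\tilde{\mu}) = f(t,y),\qquad \mathcal{H}(t,y^*,\mu) = f(t,y^*) + \mu^\top g(t,y^*) - K_\mu h(\mu),
\end{equation*}
and $V_{(y^*,\boldsymbol{0})}(y(0),\boldsymbol{0})=\tfrac{1}{2}\|y(0)-y^*\|^2$, so substituting into (\ref{l3}) gives
\begin{equation*}
\mathcal{R}^T \le \frac{\|y(0)-y^*\|^2}{2\varepsilon} + \int_0^T \bigl(\mu(t)^\top g(t,y^*) - K_\mu h(\mu(t))\bigr)\,dt.
\end{equation*}

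It then suffices to show the right-hand integrand is nonpositive pointwise, i.e.\ $\mu^\top g(t,y^*) \le K_\mu h(\mu)$ whenever $K_\mu \ge NK_g$. I would fix a reference multiplier (say $\mu_1$, which stays in $\mathbb{R}_+^q$ because the projection in (\ref{eqpi1b}) preserves nonnegativity and $\mu_1(0)=\boldsymbol{0}$) and split
\begin{equation*}
\sum_{i=1}^N \mu_i^\top g_i(t,y_i^*) = \mu_1^\top\!\sum_{i=1}^N g_i(t,y_i^*) + \sum_{i=1}^N (\mu_i-\mu_1)^\top g_i(t,y_i^*).
\end{equation*}
The first piece is nonpositive by the feasibility of $y^*$ combined with $\mu_1\ge\boldsymbol{0}$, while H\"older's inequality together with $\|g_i\|_\infty\le\|g_i\|\le K_g$ (Assumption~\ref{asp2}) bounds the second by $K_g\sum_{i=1}^N\|\mu_i-\mu_1\|_1$.

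The decisive step is then dominating $\sum_i\|\mu_i-\mu_1\|_1$ by a constant multiple of $h(\mu)$, which is where Assumption~\ref{asp1} really enters. I would fix a spanning tree $\mathcal{T}$ of $\mathcal{G}$ rooted at node $1$ and, for each $i$, telescope $\|\mu_i-\mu_1\|_1$ along its unique $i$-to-$1$ tree path via the triangle inequality. Summing over $i$, each tree edge is used by at most $N$ such paths, yielding $\sum_i\|\mu_i-\mu_1\|_1 \le N\sum_{(a,b)\in\mathcal{T}}\|\mu_a-\mu_b\|_1 \le \tfrac{N}{2}h(\mu)$, where the last inequality uses that $\mathcal{T}\subseteq\mathcal{E}$ and that $h(\mu)$ counts each undirected edge in both orientations. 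Combined with $K_\mu\ge NK_g$, this gives $\mu^\top g(t,y^*)\le \tfrac{NK_g}{2}h(\mu)\le K_\mu h(\mu)$, which closes the argument and delivers the stated constant regret bound.

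The main obstacle is this last bookkeeping step: the inequality $\sum_i\|\mu_i-\mu_1\|_1 \le \tfrac{N}{2}h(\mu)$ is exactly what connectedness of $\mathcal{G}$ buys and what justifies the design choice $K_\mu\ge NK_g$. Careful tracking of $1$-norm versus $2$-norm constants when invoking $K_g$, and of the factor-of-two coming from the undirected double-counting in $h(\mu)$, are where the analysis could most easily slip.
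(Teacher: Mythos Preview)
Your argument is correct and follows the paper's high-level plan: specialize Lemma~\ref{lemma4} with $\tilde y=y^*$, $\tilde\mu=\boldsymbol 0$, then kill the residual term by proving $\mu^\top g(t,y^*)\le K_\mu h(\mu)$ pointwise. Where you diverge is in that last inequality. The paper centers at the \emph{average} $\bar\mu=\frac1N\sum_i\mu_i$, uses feasibility to drop $\bar\mu^\top\sum_i g_i(t,y_i^*)$, then bounds $\phi(\mu)^2$ via Cauchy--Schwarz and the chain
\[
\phi(\mu)^2\le NK_g^2\|\mu-\boldsymbol 1_N\otimes\bar\mu\|^2\le K_g^2\sum_{i,j}\|\mu_i-\mu_j\|_1^2\le N^2K_g^2\,h(\mu)^2,
\]
the last step using connectedness through $\|\mu_{i_0}-\mu_{j_0}\|_1\le h(\mu)$ for every pair. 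You instead anchor at a single node $\mu_1$, apply H\"older termwise, and control $\sum_i\|\mu_i-\mu_1\|_1$ by a spanning-tree path-counting argument. Your route is more elementary (no squaring or $2$-norm/$1$-norm juggling) and in fact sharper: it shows $K_\mu\ge NK_g/2$ already suffices, whereas the paper's squared-inequality approach naturally lands on $K_\mu\ge NK_g$. The paper's use of $\bar\mu$ is slightly cleaner conceptually (it is the ``right'' centroid for consensus error), but both arguments exploit connectedness in the same essential way, and your constant-tracking caveats about the $\|\cdot\|_\infty\le\|\cdot\|_2$ step and the double-counting in $h(\mu)$ are on point.
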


\begin{proof}
	By choosing $\tilde{y}=y^*$ and $\tilde{\mu}=\boldsymbol{0}_{Nq}$ in Lemma \ref{lemma4}, one has
	\begin{align}
		\int_{0}^{T} \Big(\mathcal{H}(t,y,\boldsymbol{0}_{Nq}) - \mathcal{H}(t,y^*,\mu)\Big)dt\le\frac{V_{(y^*, \boldsymbol{0}_{Nq})}(y(0), \boldsymbol{0})}{\varepsilon}.\label{th1a}
	\end{align}
	According to (\ref{regert}) and (\ref{lag}), it can be obtained that
	\begin{align}
		\mathcal{R}^T{=}
		&\int_{0}^{T} \!\!\left(\mathcal{H}(t,y,\boldsymbol{0}_{Nq}) {-} \mathcal{H}(t,y^*,\mu)\right)dt
		{+}\int_{0}^{T} \!\mu^\top g(t,y^*)\, dt\notag\\
		&-\int_{0}^{T} K_\mu h(\mu)\, dt.\label{th1b}
	\end{align}	
	
	Consider the second term of the right-hand side of (\ref{th1b}). Let $\phi(\mu):=\mu^\top g(t,y^*)$ for simplicity. Then, by introducing an intermediate variable $\bar{\mu}:=\frac{1}{N}\sum_{i=1}^{N}\mu_i$ and the relationship $\sum_{i=1}^{N} g_{i}(t, y_i^*) \le \boldsymbol0$, one has that
	\begin{align}
		\phi(\mu)\le(\mu-\boldsymbol{1}_N\otimes \bar{\mu})^\top g(t,y^*).
	\end{align}
	Further, one can obtain that
	\begin{align}
		\phi(\mu)^2&\le\big((\mu-1_N\otimes \bar{\mu})^\top g(t,y^*)\big)^2\notag\\
		&\le NK_g^2\left\|\mu-1_N\otimes \bar{\mu}\right\|^2\notag\\
		&= NK_g^2 \sum_{i=1}^{N} \Big\|\mu_i-\frac{1}{N}\sum_{j=1}^{N}\mu_j\Big\|^2\notag\\
		&\le K_g^2 \sum_{i=1}^{N} \sum_{j=1}^{N} \left\|\mu_i-\mu_j\right\|^2\notag\\
		&\le K_g^2 \sum_{i=1}^{N} \sum_{j=1}^{N} \left\|\mu_i-\mu_j\right\|_1^2.
	\end{align}
	
	Since $\mathcal{G}$ is connected,  there always exists a path connecting nodes $i_0$ and $j_0$ for any $i_0,j_0\in\mathcal{V}$, i.e.,
	\begin{align}
		h(\mu)^2=\left(\sum_{i=1}^N\sum_{j=1}^Na_{ij}\left\|\mu_i-\mu_j\right\|_1\right)^2\ge \left\|\mu_{i_0}-\mu_{j_0}\right\|_1^2.
	\end{align}
	
	Then for $K_\mu \ge NK_g$, one has that
	$\phi(\mu)^2\le K_\mu^2 h(\mu)^2$,
	i.e.,
	\begin{align}
		\phi(\mu)-K_\mu h(\mu)\le 0.\label{th1c}
	\end{align}

	Substituting (\ref{th1a}) and (\ref{th1c}) into (\ref{th1b}) completes the proof.
	
\end{proof}

\begin{theorem}\label{th2}	
	Suppose that Assumptions \ref{asp1}-\ref{asp4} hold. Then for any $T\ge0$ and $\varepsilon>0$ in control protocol (\ref{eqpi1}), by choosing $K_\mu \ge NK_g$, it holds for the fit that
	\begin{align}
		\mathcal{F}^T\le \frac{\sqrt{N}\|y(0)-y^*\|}{\varepsilon}  +  2N\sqrt{\frac{K_f}{\varepsilon}}\sqrt{T}.
	\end{align}
	
\end{theorem}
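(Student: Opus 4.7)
The plan is to invoke Lemma~\ref{lemma4} with a dual auxiliary point that is \emph{homogeneous} across agents. Take $\tilde{y} = y^*$ and $\tilde{\mu} = \boldsymbol{1}_N \otimes \gamma$ for an as-yet-unspecified $\gamma \in \mathbb{R}_+^q$. Because every block of $\tilde{\mu}$ equals $\gamma$, the disagreement term vanishes, $h(\tilde{\mu}) = 0$, so $\mathcal{H}(t, y, \tilde{\mu}) = f(t, y) + \gamma^\top \sum_{i=1}^N g_i(t, y_i)$, and moreover $\|\tilde{\mu}\|^2 = N\|\gamma\|^2$. Substituting these into (\ref{l3}) produces an inequality whose right-hand side is exactly $\tfrac{\|y(0)-y^*\|^2 + N\|\gamma\|^2}{2\varepsilon}$.

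Next I would discard two groups of terms on the left. The contribution from $-\mathcal{H}(t, \tilde{y}, \mu)$ contains $K_\mu h(\mu) - \mu^\top g(t, y^*)$, which is nonnegative for $K_\mu \ge NK_g$ by the inequality (\ref{th1c}) already proved in Theorem~\ref{th1}; hence its time integral can be dropped. The remaining cost gap $\int_0^T \bigl(f(t, y^*) - f(t, y)\bigr)\,dt$ is bounded above by $2NK_fT$ from $|f_i| \le K_f$ in Assumption~\ref{asp2}. Writing $F := \int_0^T \sum_{i=1}^N g_i(t, y_i)\,dt$, the result is
\begin{align*}
\gamma^\top F \le \frac{\|y(0)-y^*\|^2 + N\|\gamma\|^2}{2\varepsilon} + 2NK_fT,
\end{align*}
valid for every $\gamma \in \mathbb{R}_+^q$.

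The last step is to choose $\gamma$ so that $\gamma^\top F$ reproduces $\mathcal{F}^T$. If $\mathcal{F}^T = 0$ the claim is trivial, so assume $\|[F]_+\| > 0$ and set $\gamma = \alpha\, [F]_+/\|[F]_+\|$ with $\alpha > 0$ to be tuned. Since $[F]_+^\top F = \|[F]_+\|^2 = (\mathcal{F}^T)^2$, this gives $\gamma^\top F = \alpha \mathcal{F}^T$ and $\|\gamma\|^2 = \alpha^2$, so dividing by $\alpha$ yields
\begin{align*}
\mathcal{F}^T \le \frac{\|y(0)-y^*\|^2}{2\varepsilon\alpha} + \frac{N\alpha}{2\varepsilon} + \frac{2NK_fT}{\alpha}.
\end{align*}
Minimizing over $\alpha$ (equivalently, taking $\alpha = \sqrt{(\|y(0)-y^*\|^2 + 4\varepsilon NK_fT)/N}$) collapses the first and third terms against the second and, after applying $\sqrt{a+b} \le \sqrt{a} + \sqrt{b}$, delivers the stated bound $\tfrac{\sqrt{N}\|y(0)-y^*\|}{\varepsilon} + 2N\sqrt{K_f/\varepsilon}\sqrt{T}$.

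The only genuinely clever step, and thus the main obstacle, is the twin choice that makes $\tilde{\mu}$ suitable for the \emph{fit} rather than the regret: stacking equal copies of $\gamma$ annihilates the consensus penalty $h(\tilde{\mu})$ so that no disagreement slack contaminates the inequality, while orienting $\gamma$ along $[F]_+$ converts the linear pairing $\gamma^\top F$ into the Euclidean norm defining $\mathcal{F}^T$. Once these two moves are in place, everything reduces to Lemma~\ref{lemma4}, the bound from Theorem~\ref{th1}, and a one-parameter AM--GM optimization.
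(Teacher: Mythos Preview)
Your proof is correct and follows essentially the same route as the paper: apply Lemma~\ref{lemma4} with $\tilde{y}=y^*$ and $\tilde{\mu}=\boldsymbol{1}_N\otimes\gamma$, drop the dual terms via (\ref{th1c}), bound the cost gap by $2NK_fT$, and then pick $\gamma$ aligned with $[F]_+$. The only cosmetic difference is that the paper fixes $\gamma=\tfrac{\varepsilon}{N}[F]_+$ up front and solves the resulting quadratic inequality in $\mathcal{F}^T$, whereas you normalize $[F]_+$ and optimize the scalar $\alpha$; the two parametrizations collapse to the same arithmetic and the same final bound.
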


\begin{proof}
	By Lemma \ref{lemma4} with $\tilde{y}=y^*$ and $\tilde{\mu}= \boldsymbol{1}_N \otimes \gamma$, where $\gamma=col(\gamma_1,\dots,\gamma_q)\in\mathbb{R}^q$ is a parameter to be determined later, one has that
	\begin{align}
		&\int_{0}^{T} \Big(\mathcal{H}(t,y,\boldsymbol{1}_N\otimes \gamma) - \mathcal{H}(t,y^*,\mu)\Big) \,dt\notag\\
		=
		&\int_{0}^{T} \bigg(f(t,y){+}\gamma^\top \sum_{i=1}^{N}g_i(t,y_i){-}f(t,y^*){-}\mu^\top g(t,y^*) \notag\\
		&+K_\mu h(\mu) \bigg) dt \notag\\
		\le
		& \frac{V_{(\tilde{y}, \tilde{\mu})}(y(0), \boldsymbol{0})}{\varepsilon}.\label{th2a}
	\end{align}
	Invoking Assumption \ref{asp2} yields
	\begin{align}
		\int_{0}^{T} \big(f(t,y^*)-f(t,y) \big)\,dt \le 2NK_fT.\label{th2b}
	\end{align}
	
	Substituting (\ref{th1c}) and (\ref{th2b}) into (\ref{th2a}), one has that
	\begin{align}
		\int_{0}^{T} \gamma^\top \sum_{i=1}^{N}g_i(t,y_i)\,dt \le& \frac{V_{(\tilde{y}, \tilde{\mu})}(y(0), \boldsymbol{0})}{\varepsilon} + 2NK_fT.\notag
	\end{align}
	
	By choosing
	\begin{align}
		\gamma_j=\left\{
		\begin{aligned}
			& 0, &F_j^T\le 0 \\
			& \frac{\varepsilon}{N}F_j^T, &F_j^T>0
		\end{aligned}
		\right.,j=1,\dots,q,\notag
	\end{align}
	it can be concluded that
	\begin{align}
		\frac{\varepsilon}{N}\|\mathcal{F}^T\|^2\le &\frac{\|y(0)-y^*\|^2 + \frac{\varepsilon^2}{N}\|\mathcal{F}^T\|^2}{2\varepsilon} + 2NK_fT.\notag
	\end{align}
	
	It can be further obtained by transposition that
	\begin{align}
		\mathcal{F}^T \le& \frac{\sqrt{N}\|y(0)-y^*\|_2}{\varepsilon}  +  2N\sqrt{\frac{K_f}{\varepsilon}}\sqrt{T}.
	\end{align}	
\end{proof}

\begin{remark}\label{remark2}
	
	Theorems \ref{th1} and \ref{th2} mean that $\mathcal{R}^T=\mathcal{O}(1)$ and $\mathcal{F}^T = \mathcal{O}(\sqrt{T})$ under continuous communication.
	In comparison, explicit bounds on both the regret and fit with a sublinear growth are obtained in \cite{yi2020,li2022} for single-integrator multi-agent systems, i.e., $\mathcal{R}^T=\mathcal{O}(T^{\max\{\kappa,1-\kappa\}})$, $\mathcal{F}^T = \mathcal{O}(T^{\max\{\kappa,1-\kappa\}})$ in \cite{yi2020} and $\mathcal{R}^T= \mathcal{O}(T^{\max\{\kappa,1-\kappa\}})$, $\mathcal{F}^T = \mathcal{O}(T^{\max\{\frac{1}{2}+\frac{\kappa}{2},1-\frac{\kappa}{2}\}})$ in \cite{li2022} for $\kappa\in(0,1)$. Theorem \ref{th1} achieves stricter regret bound than \cite{yi2020,li2022} under more complex system dynamics.
\end{remark}

\subsection{Event-triggered Communication}
The above continuous-time control law, which requires each agent to know the real-time Lagrange multipliers of neighbors, may cause excessive communication overhead. 
In this section, an event-triggered protocol is proposed to avoid continuous communication.

For agent $i$, suppose that $t_i^l$ is its $l$th communication instant and $\{t_i^1, \dots, t_i^l, \dots\}$ is its communication instant sequence. Define $\hat{\mu}_i(t)\!:=\! \mu_i(t_i^l), ~\forall t\!\in\! [t_i^l,t_i^{l+1})$ as the available information of its neighbors and $e_i\!:=\!\hat \mu_i(t)-\mu_i(t)$ as the measurement error. It can be known  that $e_i=0$ at any instant  $t_i^l$.

An event-triggered control law is proposed as
\begin{subequations}\label{eqpi4}
	\begin{align}
		u_i&=-K_{\alpha_i} x_i+K_{\beta_i} \left(\Pi_{\mathcal{Y}_i}[y_i, -\varepsilon\mathcal{H}_i^{y_i}(t,y_i,\mu_i)]\right),\label{eqpi4a}\\
		\dot{\mu_i}&=\Pi_{\mathbb{R}_+^q}[\mu_i,\varepsilon g_i(t,y_i)-2\varepsilon K_\mu \sum_{j=1}^N a_{ij}\sign{\hat\mu_i-\hat\mu_j}], \label{eqpi4b}
	\end{align}
\end{subequations}
where $\varepsilon>0$ is the step size, $K_{\alpha_i},K_{\beta_i}$ are feedback matrices that are solutions of (\ref{eqle3}), $a_{ij}$ is the weight corresponding to the edge $(j,i)$, and the initial value $\mu_i(0)=\boldsymbol{0}$. Note that $0$ is chosen for the sign function in (\ref{eqpi4b}) when its argument is zero.

Substituting controller (\ref{eqpi4}) into (\ref{eqsys}), the system dynamics of the $i$th agent becomes
\begin{subequations}\label{eqpi5}
	\begin{align}
		\dot x_i&=(A_i{-}B_i K_{\alpha_i})x_i+B_iK_{\beta_i}\left(\Pi_{\mathcal{Y}_i}[y_i, -\varepsilon\mathcal{H}_i^{y_i}(t,y_i,\mu_i)]\right),\label{eqpi5a}\\
		\dot{\mu_i}&=\Pi_{\mathbb{R}_+^q}[\mu_i, \varepsilon g_i(t,y_i) {-}2\varepsilon K_\mu \!\sum_{j=1}^N a_{ij}\sign{\hat\mu_i-\hat\mu_j}], \label{eqpi5b}\\
		y_i&=C_ix_i.\label{eqpi5c}
	\end{align}
\end{subequations}

The communication instant is chosen as
\begin{align}
	t_i^{l+1}\!:=\!\inf_{t>t_i^l}\!\Big\{t\Big|\|e_i(t)\|{\ge}\frac{1}{6N\!\sqrt{q}}\!\sum_{j=1}^N \!a_{ij} \!\|\hat\mu_i{-}\hat\mu_j\|_{\!1} {+} \frac{\sigma e^{-\iota t}}{3N^{\!2}\!K_{\!\mu}\!\sqrt{q}}\Big\},\label{tau2}
\end{align}
where $\sigma$ and $\iota$ are prespecified positive real numbers. 

The following lemma is a modification of Lemma \ref{lemma4} under event-triggered communication.
\begin{lemma}\label{lemma6}
	If Assumptions \ref{asp1}-\ref{asp4} hold and $\tilde{\mu}= \boldsymbol1_N \otimes \gamma, \forall \gamma\in \mathbb{R}_+^q$, then for any $T\ge0$ the trajectories of linear multi-agent system (\ref{eqsys}) with control protocol (\ref{eqpi5}) satisfy
	\begin{align}
		\int_{0}^{T} \Big(\mathcal{H}(t,y,\tilde{\mu}){-}\mathcal{H}(t,\tilde{y},\mu)\!\Big)dt{\le}\frac{V_{(\tilde{y}, \tilde{\mu})}(y(0), \boldsymbol{0})}{\varepsilon} {+} \frac{\sigma}{\iota}.
	\end{align}
\end{lemma}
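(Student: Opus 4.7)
The plan is to mirror the energy-function argument of Lemma~\ref{lemma4}, and then to bound the extra terms arising because (\ref{eqpi5b}) uses $2K_\mu\,\mathrm{sgn}(\hat\mu_i-\hat\mu_j)$ in place of $K_\mu\,\mathrm{sgn}(\mu_i-\mu_j)$; the discrepancy will be absorbed using the triggering rule (\ref{tau2}).

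First I would differentiate $V_{(\tilde y,\tilde\mu)}$ along (\ref{eqpi5}). The $y$-part is untouched by the triggering, so (\ref{eqle2}) together with per-agent convexity of $\mathcal{H}_i$ in $y_i$ reproduces exactly the Lemma~\ref{lemma4} bound $\varepsilon\bigl(\mathcal{H}(t,\tilde y,\mu)-\mathcal{H}(t,y,\mu)\bigr)$. For the $\mu$-part, after applying (\ref{eqle2}), I would algebraically split the inner bracket as $g_i(t,y_i)-2K_\mu\sum_j a_{ij}\mathrm{sgn}(\hat\mu_i-\hat\mu_j)=\mathcal{H}_i^{\mu_i}(t,y_i,\mu_i)-K_\mu\sum_j a_{ij}\bigl(2\,\mathrm{sgn}(\hat\mu_i-\hat\mu_j)-\mathrm{sgn}(\mu_i-\mu_j)\bigr)$. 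Per-agent concavity of $\mathcal{H}_i$ in $\mu_i$ applied to the $\mathcal{H}_i^{\mu_i}$ term reproduces the continuous-communication bound $\varepsilon\bigl(\mathcal{H}(t,y,\mu)-\mathcal{H}(t,y,\tilde\mu)\bigr)$, leaving only the bracketed ``correction'' to be tamed.

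For that correction I would exploit the symmetry $a_{ij}=a_{ji}$, the anti-symmetry of $\mathrm{sgn}(\cdot)$, and $\tilde\mu_i=\tilde\mu_j=\gamma$ to collapse $2\sum_{i,j}a_{ij}(\mu_i-\tilde\mu_i)^{\top}\mathrm{sgn}(\hat\mu_i-\hat\mu_j)$ into $\sum_{i,j}a_{ij}(\mu_i-\mu_j)^{\top}\mathrm{sgn}(\hat\mu_i-\hat\mu_j)$ and, analogously, $\sum_{i,j}a_{ij}(\mu_i-\tilde\mu_i)^{\top}\mathrm{sgn}(\mu_i-\mu_j)$ into $\tfrac12 h(\mu)$. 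Writing $\mu_i-\mu_j=(\hat\mu_i-\hat\mu_j)-(e_i-e_j)$ converts the first sum into $h(\hat\mu)$ plus a residual controlled by $\sum_{i,j}a_{ij}\|e_i-e_j\|_1$, while the triangle inequality $h(\mu)\le h(\hat\mu)+\sum_{i,j}a_{ij}\|e_i-e_j\|_1$ disposes of the stray $h(\mu)$. Assembling the pieces, the correction is bounded above by $-\tfrac{\varepsilon K_\mu}{2}h(\hat\mu)+\tfrac{3\varepsilon K_\mu}{2}\sum_{i,j}a_{ij}\|e_i-e_j\|_1$.

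The main obstacle, and what dictates the precise coefficients in (\ref{tau2}), is arranging for the $h(\hat\mu)$ contributions to cancel. Bounding $\|e_i-e_j\|_1\le\sqrt q(\|e_i\|+\|e_j\|)$ and using $\sum_j a_{ij}\le N$ reduces $\sum_{i,j}a_{ij}\|e_i-e_j\|_1$ to $2\sqrt q\,N\sum_i\|e_i\|$; summing the triggering threshold (\ref{tau2}) over $i$ then yields $\sum_{i,j}a_{ij}\|e_i-e_j\|_1\le\tfrac{1}{3}h(\hat\mu)+\tfrac{2\sigma}{3K_\mu}e^{-\iota t}$. Substituting makes the two $h(\hat\mu)$ terms cancel exactly and leaves only $\varepsilon\sigma e^{-\iota t}$, giving $\dot V_{(\tilde y,\tilde\mu)}\le\varepsilon\bigl(\mathcal{H}(t,\tilde y,\mu)-\mathcal{H}(t,y,\tilde\mu)\bigr)+\varepsilon\sigma e^{-\iota t}$. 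Integrating on $[0,T]$ and invoking $V_{(\tilde y,\tilde\mu)}(y(T),\mu(T))\ge 0$, $\mu(0)=\boldsymbol 0$, and $\int_0^T e^{-\iota t}\,dt\le 1/\iota$ delivers the stated bound.
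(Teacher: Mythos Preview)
Your proposal is correct and follows essentially the same route as the paper: you differentiate $V_{(\tilde y,\tilde\mu)}$, split the $\mu$-dynamics into the ideal $\mathcal{H}_i^{\mu_i}$ piece plus the sign-mismatch correction, symmetrize using $\tilde\mu_i=\gamma$ and $a_{ij}=a_{ji}$, substitute $\mu_i-\mu_j=(\hat\mu_i-\hat\mu_j)-(e_i-e_j)$, and then invoke the triggering rule (\ref{tau2}) so that the $h(\hat\mu)$ terms cancel and only $\varepsilon\sigma e^{-\iota t}$ survives. The only cosmetic difference is that you keep the weights $a_{ij}$ and use $\|e_i-e_j\|_1$ throughout, whereas the paper drops $a_{ij}\le 1$ early and works with $\sqrt{q}\,\|e_i-e_j\|$; both bookkeeping choices lead to the same final inequality.
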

\begin{proof}
	Similar to (\ref{v11}), one can obtain that
	\begin{align}
		\dot{V}_{(\tilde{y}, \tilde{\mu})}
		= 
		&(\mu-\tilde{\mu})^\top \dot{\mu} + (y-\tilde{y})^\top \dot{y}\notag\\
		=
		&\!\sum_{i=1}^{N} \!(\mu_i{-}\tilde{\mu}_i\!)\!^\top \!\Pi_{\mathbb{R}_+^q}\!\Big[\mu_i,\varepsilon g_i(t,y_i) {-}2\varepsilon K\!_\mu\! \sum_{j=1}^N \!a_{i\!j}\sign{\hat\mu_i{-}\hat\mu_j}\!\Big] \notag\\
		& \!+\! \sum_{i=1}^{N} (y_i\!-\!\tilde{y}_i\!)\!^\top \Pi_{\mathcal{Y}_i}\big[y_i, \!-\varepsilon\mathcal{H}_i^{y_i}(t,\!y_i,\!\mu_i)\big] \notag\\
		\le
		& \underbrace{\varepsilon \sum_{i=1}^{N} (\mu_i{-}\tilde{\mu}_i\!)\!^\top\! \mathcal{H}_i^{\mu_i}(t,\!y_i,\!\mu_i)
		{-}\varepsilon \!\sum_{i=1}^{N} (y_i\!-\!\tilde{y}_i\!)\!^\top\! \mathcal{H}_i^{y_i}(t,\!y_i,\!\mu_i)}_{S_{1}}\notag\\
		& \underbrace{ -2\varepsilon K_\mu \sum_{i=1}^{N} \!(\mu_i{-}\tilde{\mu}_i\!)\!^\top \sum_{j=1}^N a_{ij}\sign{\hat\mu_i-\hat\mu_j} }_{S_{2}} \notag\\
		&+ \underbrace{ \varepsilon K_\mu \sum_{i=1}^{N} \!(\mu_i{-}\tilde{\mu}_i\!)\!^\top  \sum_{j=1}^N a_{ij}\sign{\mu_i-\mu_j} }_{S_{3}}.	\label{vb1}
	\end{align}
	
	Since the Lagrangian (\ref{la}) is convex with respect to $y_i$ and concave with respect to $\mu_i$, one has that
	\begin{align}
		S_{1} \le & \varepsilon \sum_{i=1}^{N} \Big(\mathcal{H}_i (t,\tilde{y}_i,\mu_i) - \mathcal{H}_i (t,y_i,\mu_i)\Big)\notag\\  &+\varepsilon \sum_{i=1}^{N} \Big(\mathcal{H}_i(t,y_i,\mu_i) - \mathcal{H}_i(t,y_i,\tilde{\mu}_i)\Big) \notag\\
		= &\varepsilon \Big(\mathcal{H}(t,\tilde{y},\mu) - \mathcal{H}(t,y,\tilde{\mu})\Big).\label{vb2}
	\end{align}
	
	Since $\tilde{\mu}= \boldsymbol1_N \otimes \gamma, \forall \gamma\in \mathbb{R}_+^q$ and graph $\mathcal{G}$ is undirected, it follows that
	\begin{align}
		S_{2}=
		&-2\varepsilon K_\mu \sum_{i=1}^{N} \sum_{j=1}^N a_{ij}\mu_i^\top\sign{\hat\mu_i-\hat\mu_j}\notag\\
		=
		&-\varepsilon K_\mu \sum_{i=1}^{N}  \sum_{j=1}^N a_{ij}\mu_i^\top\sign{\hat\mu_i-\hat\mu_j}\notag\\
		&- \varepsilon K_\mu \sum_{j=1}^{N}  \sum_{i=1}^N a_{ji}\mu_j^\top\sign{\hat\mu_j-\hat\mu_i} \notag\\
		=
		&-\varepsilon K_\mu \sum_{i=1}^{N}  \sum_{j=1}^N a_{ij}(\mu_i-\mu_j)^\top \sign{\hat\mu_i-\hat\mu_j}\notag\\
		= 
		&-\varepsilon K_\mu \sum_{i=1}^{N}  \sum_{j=1}^N a_{ij} (\hat\mu_i-\hat\mu_j-e_i+e_j)^\top \sign{\hat\mu_i-\hat\mu_j}\notag\\
		\le
		& \!-\!\varepsilon K_\mu \!\sum_{i=1}^{N}\!\sum_{j=1}^N a_{ij} \|\hat\mu_i{-}\hat\mu_j\|_1 \!+\! \varepsilon K_\mu\!\sqrt{q} \sum_{i=1}^{N}  \!\sum_{j=1}^N \|e_i{-}e_j\|,	\label{vb3}
	\end{align}
	where the last inequality holds since the relationship  $\|v\|_1\le\sqrt{q}\|v\|, \forall v\in\mathbb{R}^q$.
	
	Similarly, one has that
	\begin{align}
		S_{3}
		= 
		&\varepsilon K_\mu \sum_{i=1}^{N} \!\mu_i^\top  \sum_{j=1}^N a_{ij}\sign{\mu_i-\mu_j}\notag\\
		=
		&\frac{\varepsilon K_\mu}{2} \sum_{i=1}^{N} \sum_{j=1}^N a_{ij}\|\mu_i-\mu_j\|_1\notag\\
		=
		&\frac{\varepsilon K_\mu}{2} \sum_{i=1}^{N} \sum_{j=1}^N a_{ij}\|\hat\mu_i-\hat\mu_j-e_i+e_j\|_1\notag\\	
		\le	
		&\frac{\varepsilon K_\mu}{2} \sum_{i=1}^{N} \sum_{j=1}^N a_{ij}\|\hat\mu_i{-}\hat\mu_j\|_1 + \frac{\varepsilon K_\mu\sqrt{q}}{2} \sum_{i=1}^{N} \sum_{j=1}^N \|e_i{-}e_j\|.\label{vb4}
	\end{align}

	For the second item in (\ref{vb3}) and (\ref{vb4}), one can calculate that

	\begin{align}
		&\sum_{i=1}^{N}  \sum_{j=1}^N  \|e_i-e_j\|\notag\\
		\le& \sum_{i=1}^{N}  \sum_{j=1}^N  \|e_i\| + \sum_{i=1}^{N}  \sum_{j=1}^N  \|e_j\|\notag\\
		\le &2N\sum_{i=1}^{N} \|e_i\|\notag\\
		\le &\frac{1}{3\sqrt{q}}\sum_{i=1}^{N} \sum_{j=1}^N a_{ij} \|\hat\mu_i-\hat\mu_j\|_1 + \frac{2\sigma e^{-\iota t}}{3K_\mu\sqrt{q}} ,	
	\end{align}
	where the last inequality holds due to the trigger condition (\ref{tau2}).
		
	Summarizing the above-discussed analysis,	$\dot{V}_{(\tilde{y}, \tilde{\mu})}$ in (\ref{vb1}) is calculated as
	\begin{align}
		\dot{V}_{(\tilde{y}, \tilde{\mu})}
		\le
		&S_{1}+S_{2}+S_{3}\notag\\
		\le
		& \varepsilon \Big(\mathcal{H}(t,\tilde{y},\mu) - \mathcal{H}(t,y,\tilde{\mu})\Big) + \varepsilon\sigma e^{-\iota t}.\notag
	\end{align}

	By integrating it from $0$ to $T$ on both sides and omitting negative terms, it can be obtained that
	\begin{align}
		\int_{0}^{T} \Big( \mathcal{H}(t,y,\tilde{\mu}){-}\mathcal{H}(t,\tilde{y},\mu)\Big) dt {\le} \frac{V_{(\tilde{y}, \tilde{\mu})}(y(0), \boldsymbol{0})}{\varepsilon} {+} \frac{\sigma}{\iota}.
	\end{align}
\end{proof}

We now state the main results about the regret and fit bounds of event-triggered communication controller (\ref{eqpi5}).

\begin{theorem}\label{th3}
	Suppose that Assumptions \ref{asp1}-\ref{asp4} hold. Then for any $T\ge0$ and $\varepsilon>0$ in control protocol (\ref{eqpi4}), by choosing $K_\mu \ge NK_g$, the following regret bound holds:
	\begin{align}
		\mathcal{R}^T\le \frac{\|y(0)\!-\!y^*\|^2}{2\varepsilon} +\frac{\sigma}{\iota}.
	\end{align}
\end{theorem}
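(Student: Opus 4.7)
The proof will parallel Theorem \ref{th1} almost verbatim, with Lemma \ref{lemma6} replacing Lemma \ref{lemma4} as the engine that bounds the Lagrangian gap. The key observation is that the event-triggered controller (\ref{eqpi5}) modifies only the multiplier dynamics; the Lagrangian $\mathcal{H}(t,y,\mu)$ in (\ref{lag}) is the same object as in the continuous-communication setting, so the algebraic decomposition of $\mathcal{R}^T$ in terms of $\mathcal{H}$ is unchanged. The only new ingredient is the additive error $\sigma/\iota$ that Lemma \ref{lemma6} picks up relative to Lemma \ref{lemma4}.

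Concretely, the plan is as follows. First, I would apply Lemma \ref{lemma6} with the admissible choice $\tilde{y}=y^*$ and $\tilde{\mu}=\boldsymbol{0}_{Nq}$ (valid because $\boldsymbol{0}_{Nq}=\boldsymbol{1}_N\otimes\boldsymbol{0}_q$ with $\boldsymbol{0}_q\in\mathbb{R}_+^q$), obtaining
\begin{align}
\int_{0}^{T}\!\Big(\mathcal{H}(t,y,\boldsymbol{0}_{Nq}){-}\mathcal{H}(t,y^*,\mu)\Big)dt \le \frac{V_{(y^*,\boldsymbol{0}_{Nq})}(y(0),\boldsymbol{0})}{\varepsilon}+\frac{\sigma}{\iota}.\notag
\end{align}
Second, I would expand $\mathcal{H}$ via (\ref{lag}) exactly as in (\ref{th1b}) to write
\begin{align}
\mathcal{R}^T =& \int_{0}^{T}\!\Big(\mathcal{H}(t,y,\boldsymbol{0}_{Nq}){-}\mathcal{H}(t,y^*,\mu)\Big)dt\notag\\
&+\int_{0}^{T}\!\mu^\top g(t,y^*)\,dt-\int_{0}^{T}\!K_\mu h(\mu)\,dt.\notag
\end{align}

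Third, I would invoke the inequality $\phi(\mu)-K_\mu h(\mu)\le 0$ established in (\ref{th1c}) of Theorem \ref{th1}. That argument depends only on the connectivity of $\mathcal{G}$, the bound $\|g_i\|\le K_g$, the feasibility of $y^*$ (so $\sum_i g_i(t,y_i^*)\le 0$), and the parameter choice $K_\mu\ge NK_g$; none of these rely on the specific controller, so the inequality is directly reusable here. Substituting the two bounds into the decomposition of $\mathcal{R}^T$ and noting $V_{(y^*,\boldsymbol{0}_{Nq})}(y(0),\boldsymbol{0})=\tfrac{1}{2}\|y(0)-y^*\|^2$ yields the desired regret estimate.

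There is no genuine obstacle here: all of the event-triggered analysis has been absorbed into Lemma \ref{lemma6}, so the proof is a short combination of that lemma with the purely algebraic argument already used in Theorem \ref{th1}. The only point requiring care is to verify that the choice $\tilde{\mu}=\boldsymbol{0}_{Nq}$ is permitted in the hypothesis $\tilde{\mu}=\boldsymbol{1}_N\otimes\gamma$ with $\gamma\in\mathbb{R}_+^q$ of Lemma \ref{lemma6}, which is immediate by taking $\gamma=\boldsymbol{0}_q$.
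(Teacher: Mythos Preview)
Your proposal is correct and follows exactly the approach the paper itself takes: the paper states that the proof of Theorem \ref{th3} is ``similar to that of Theorem \ref{th1}, except that Lemma \ref{lemma6} is used instead of Lemma \ref{lemma4},'' and omits the details. Your explicit verification that $\tilde{\mu}=\boldsymbol{0}_{Nq}$ satisfies the hypothesis of Lemma \ref{lemma6} and your observation that the algebraic bound (\ref{th1c}) is controller-independent are precisely the checks needed to make that one-line sketch rigorous.
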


\begin{theorem}\label{th4}	
	Suppose that Assumptions \ref{asp1}-\ref{asp4} hold. Then for any $T\ge0$ and $\varepsilon>0$ in control protocol (\ref{eqpi4}), by choosing $K_\mu \ge NK_g$, the following fit bound holds:
	\begin{align}
		\mathcal{F}^T\le \frac{\sqrt{N}\|y(0)-y^*\|}{\varepsilon}  + \sqrt{\frac{2N\sigma}{\varepsilon\iota}} + 2N\sqrt{\frac{K_f}{\varepsilon}}\sqrt{T}.
	\end{align}
	
\end{theorem}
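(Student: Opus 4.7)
The plan is to parallel the argument used for Theorem \ref{th2}, substituting Lemma \ref{lemma6} in place of Lemma \ref{lemma4} so that the extra $\sigma/\iota$ term inherited from the event-triggered analysis propagates through as an additive constant under the square root.

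First, I would apply Lemma \ref{lemma6} with $\tilde{y}=y^*$ and $\tilde{\mu}=\boldsymbol{1}_N\otimes\gamma$, where $\gamma\in\mathbb{R}_+^q$ is a parameter to be chosen later. Expanding $\mathcal{H}$ via (\ref{lag}) yields an integral inequality involving $\int_0^T\gamma^\top\sum_i g_i(t,y_i)\,dt$, $\int_0^T(f(t,y)-f(t,y^*))\,dt$, the term $-\mu^\top g(t,y^*)+K_\mu h(\mu)$, and the new right-hand side $V_{(y^*,\tilde{\mu})}(y(0),\boldsymbol{0})/\varepsilon+\sigma/\iota$. I would then reuse two bounds from the continuous-communication case without change: the feasibility inequality (\ref{th1c}), which eliminates $-\mu^\top g(t,y^*)+K_\mu h(\mu)$ once $K_\mu\ge NK_g$; and Assumption \ref{asp2}, which bounds $\int_0^T(f(t,y^*)-f(t,y))\,dt\le 2NK_f T$.

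After these substitutions I am left with an inequality of the form
\begin{align*}
\int_{0}^{T}\gamma^\top\!\!\sum_{i=1}^{N}g_i(t,y_i)\,dt \le \frac{\tfrac{1}{2}\|y(0)-y^*\|^2+\tfrac{N}{2}\|\gamma\|^2}{\varepsilon} + \frac{\sigma}{\iota} + 2NK_f T,
\end{align*}
where I used $\|\tilde{\mu}\|^2=N\|\gamma\|^2$. Choosing $\gamma_j=\tfrac{\varepsilon}{N}[F_j^T]_+$ as in the proof of Theorem \ref{th2} converts the left-hand side into $\tfrac{\varepsilon}{N}\|\mathcal{F}^T\|^2$ and the $\|\gamma\|^2$ contribution on the right into $\tfrac{\varepsilon^2}{N}\|\mathcal{F}^T\|^2/(2\varepsilon)$. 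Moving this term to the left gives $\tfrac{\varepsilon}{2N}\|\mathcal{F}^T\|^2 \le \tfrac{\|y(0)-y^*\|^2}{2\varepsilon}+\tfrac{\sigma}{\iota}+2NK_f T$, and the claimed bound then follows by taking square roots and using the elementary inequality $\sqrt{a+b+c}\le\sqrt{a}+\sqrt{b}+\sqrt{c}$ for nonnegative $a,b,c$.

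There is no genuinely hard step here: the whole proof is a mechanical transcription of the argument for Theorem \ref{th2}, with Lemma \ref{lemma6} replacing Lemma \ref{lemma4}. The only item that requires a moment of care is making sure the quadratic-in-$\mathcal{F}^T$ term produced by $\|\gamma\|^2$ is correctly absorbed after the transposition; once that coefficient check is done, the three-term square-root decomposition yields exactly the stated bound.
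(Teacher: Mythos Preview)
Your proposal is correct and follows exactly the approach the paper indicates: reproduce the proof of Theorem~\ref{th2} verbatim but invoke Lemma~\ref{lemma6} in place of Lemma~\ref{lemma4}, so that the extra additive $\sigma/\iota$ is carried through and yields the middle term $\sqrt{2N\sigma/(\varepsilon\iota)}$ after the square-root decomposition. The coefficient check you describe (absorbing the $\|\gamma\|^2$ contribution to obtain $\tfrac{\varepsilon}{2N}(\mathcal{F}^T)^2$ on the left) is correct and matches the paper's own computation in Theorem~\ref{th2}.
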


The proofs of Theorems \ref{th3} and \ref{th4} are similar to that of Theorems \ref{th1} and \ref{th2}, except that Lemma \ref{lemma6} is used instead of Lemma \ref{lemma4}. They are thus omitted here.

\begin{remark}\label{remark3}
	Theorems \ref{th3} and \ref{th4} mean that $\mathcal{R}^T=\mathcal{O}(1)$ and $\mathcal{F}^T = \mathcal{O}(\sqrt{T})$ still hold even under event-triggered communication. The bounds of regret and fit are determined by the communication frequency. Generally speaking, decreasing $\sigma$ and increasing $\iota$ will achieve smaller bounds on regret and fit, but meanwhile increase the communication frequency, which results in a tradeoff between them.	
\end{remark}

\begin{theorem}\label{th5}
	Under the event triggering condition (\ref{tau2}), system (\ref{eqpi5}) does not exhibit the Zeno behavior.. 
\end{theorem}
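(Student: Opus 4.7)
The plan is to argue by contradiction: assume that agent $i$ exhibits Zeno behavior, i.e., there exists a finite $T^*>0$ such that the event sequence $\{t_i^l\}$ accumulates at $T^*$, and then show that this contradicts a lower bound on the inter-event interval coming from the boundedness of $\dot{\mu}_i$.

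First I would bound the growth rate of the measurement error. Between two consecutive events $t \in [t_i^l, t_i^{l+1})$, the held value $\hat{\mu}_i$ is constant, so $\dot{e}_i(t) = -\dot{\mu}_i(t)$. Using the standard non-expansiveness property of the tangent-cone projection, $\|\Pi_{\mathbb{R}_+^q}[\mu_i, v]\| \le \|v\|$, together with Assumption \ref{asp2} ($\|g_i(t,y_i)\| \le K_g$) and $\|\mathrm{sgn}(\cdot)\| \le \sqrt{q}$ (since each component lies in $[-1,1]$ under the convention adopted below (\ref{eqpi4b})), I would obtain from (\ref{eqpi5b}) the uniform bound
\begin{align*}
    \|\dot{\mu}_i(t)\| \le \varepsilon K_g + 2\varepsilon K_\mu N\sqrt{q} =: M.
\end{align*}
Since $e_i(t_i^l) = 0$, integrating yields $\|e_i(t)\| \le M(t - t_i^l)$ for all $t \in [t_i^l, t_i^{l+1})$.

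Next I would exploit the strictly positive offset in the triggering condition (\ref{tau2}). At the triggering instant $t_i^{l+1}$, the first term of the right-hand side of (\ref{tau2}) is non-negative, so
\begin{align*}
    \|e_i(t_i^{l+1})\| \ge \frac{\sigma e^{-\iota t_i^{l+1}}}{3N^2 K_\mu \sqrt{q}}.
\end{align*}
Combining with the linear growth bound from the previous step gives the inter-event lower bound
\begin{align*}
    t_i^{l+1} - t_i^l \ge \frac{\sigma e^{-\iota t_i^{l+1}}}{3 N^2 K_\mu \sqrt{q}\, M}.
\end{align*}

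Finally I would close the contradiction. If the event sequence accumulated at some finite $T^* < \infty$, then $t_i^{l+1} - t_i^l \to 0$ as $l \to \infty$, whereas the right-hand side above converges to $\sigma e^{-\iota T^*}/(3 N^2 K_\mu \sqrt{q}\, M) > 0$, which is impossible. Hence no finite accumulation point exists, and Zeno behavior is excluded. The only subtle point, and the main technical step, is the justification of $\|\dot{\mu}_i\| \le M$: one must verify that the set-valued sign selection in (\ref{eqpi5b}) together with the projection $\Pi_{\mathbb{R}_+^q}[\cdot,\cdot]$ indeed yields a measurable selection with norm bounded by that of the unprojected argument; modulo this standard fact, the rest of the proof is a direct computation.
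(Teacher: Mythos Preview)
Your proof is correct and takes essentially the same approach as the paper: both bound $\|e_i(t)\|\le M(t-t_i^l)$ via the non-expansiveness property $\|\Pi_{\mathbb{R}_+^q}[\mu_i,v]\|\le\|v\|$ together with the boundedness of the right-hand side of (\ref{eqpi5b}), and then use the strictly positive exponential offset in (\ref{tau2}) to rule out a finite accumulation point of triggering instants. The only differences are cosmetic---you compute $M$ explicitly and close with an explicit contradiction argument, whereas the paper leaves the constant implicit (calling it $\delta$) and concludes more tersely.
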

\begin{proof}
	In the trigger interval $[t_i^l,t_i^{l+1})$ for agent $i$, combining the definition of $e_i$ with (\ref{eqpi4b}), one can write the upper right-hand Dini derivative as
	\begin{align}
		D^+ e_i(t){=} - \!\Pi_{\mathbb{R}_+^q}[\mu_i, \varepsilon g_i(t,y_i) {-}2\varepsilon K_\mu \sum_{j=1}^N a_{ij}\sign{\hat\mu_i{-}\hat\mu_j}].\label{zeno1}
	\end{align}
	It is obvious that $e_i(t_i^l)=0$. Then, for $t\in (t_i^l,t_i^{l+1})$, the solution of (\ref{zeno1}) is
	\begin{align}
		e_i(t){=} \!\int_{t_i^l}^{t} \!- \!\Pi_{\mathbb{R}_+^q}[\mu_i, \varepsilon g_i(t,y_i) {-}2\varepsilon K_\mu \sum_{j=1}^N a_{ij}\sign{\hat\mu_i{-}\hat\mu_j}] \,d\tau. \label{zeno2}
	\end{align}
	From Assumption \ref{asp2} and the inequality $\|\Pi_{\mathcal{S}}[x,v]\|\le\|v\|$ (cf. Remark 2.1 in \cite{zhang1995}), it can be obtained that the norm of the integral term in (\ref{zeno2}) is bounded, and let the upper bound of its norm be $\delta$.  It then follows from (\ref{zeno2}) that
	\begin{align}
		\|e_i(t)\| \le \delta(t-t_i^l).
	\end{align} 
	Hence, condition (\ref{tau2}) will definitely not be triggered before the following  condition holds:
	\begin{align}
		\delta(t-t_i^l)=\frac{1}{6N\sqrt{q}}\sum_{j=1}^N a_{ij} \|\hat\mu_i-\hat\mu_j\|_1 {+} \frac{\sigma e^{-\iota t}}{3N^2K_\mu\sqrt{q}} .\label{zeno3}
	\end{align}
	
	It is easy to obtain that the right-hand side of (\ref{zeno3}) is positive for any finite time $t$,  which further implies that $t-t_i^l>0$. Hence, the value $t_i^{l+1}-t_i^l$ is strictly positive for finite $t$,
	which implies that no Zeno behavior is exhibited.
\end{proof}

\section{SIMULATION}\label{sim}

Consider a heterogeneous multi-agent system composed of $5$ agents described by (\ref{eqsys}), where 
$x_i\in\begin{cases}
	\mathbb{R}^2 &i=1,2,3\\\mathbb{R}^3 &i=4,5
\end{cases}$,
$y_i=\begin{cases}
	(y_{i,a},y_{i,b})\in\mathbb{R}^2 &i=1,2,3\\(y_{i,a},y_{i,b},y_{i,c})\in\mathbb{R}^3 &i=4,5
\end{cases}$,
 $A_{1,2}$ $=$ $[1,0;0,2]$, $A_{3}$ $=$ $[0,2;-1,1]$, $A_{4,5}$ $=$ $[2,1,0;0,1,1;1,0,2]$, $B_{1,2}$ $=$ $[0,1;1,3]$, $B_{3}$ $=$ $[2,1;1,0]$, $B_{4,5}$ $=$ $[1,0,0;0,1,0;0,0,1]$, $C_{1,2}$ $=$ $[2,0;0,1]$, $C_{3}$ $=$ $[2,1;-1,0]$, $C_{4,5}$ $=$ $[3,0,0;0,1,0;0,1,2]$.

\begin{figure}[htbp]
	\centering
	\includegraphics[width=0.28\textwidth]{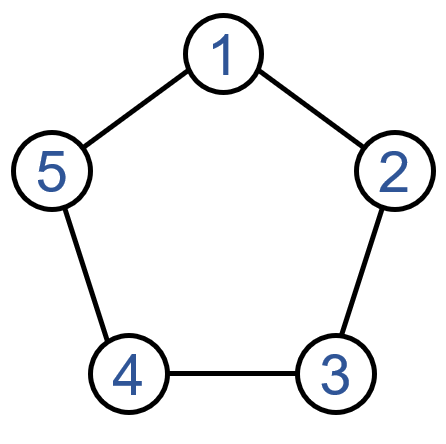} 
	\caption{Communication network among five agents.}
	\label{pic1}
\end{figure}

\begin{figure}[htbp]
	\centering
	\includegraphics[width=0.45\textwidth]{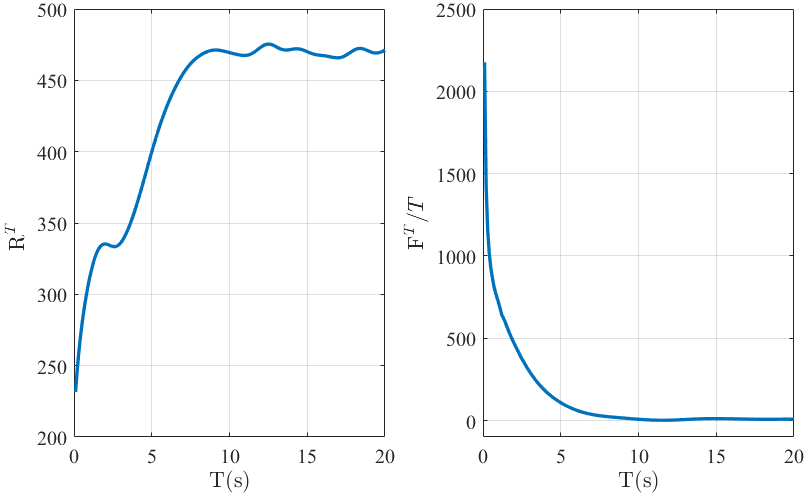} 
	\caption{ Evolution of $\mathcal{R}^T$ and $\mathcal{F}^T/T$ with continuous communication.}
	\label{pic3}
\end{figure}

The local objective functions are time-varying quadratic functions as follows:
\begin{align*}
	f_1(t,y_1)=&(y_{1,a}{-}\cos{t}{-}1)^2{+}(y_{1,b}{-}\cos{1.5t}{-}1.5)^2;\\
	f_2(t,y_2)=&2(y_{2,a}{-}\cos{t}{-}1)^2{+}3(y_{2,b}{-}\cos{1.7t}{-}1)^2;\\
	f_3(t,y_3)=&2(y_{3,a}{-}\cos{t}{-}1)^2{+}(y_{3,b}{-}\cos{2t}{-}1)^2;\\
	f_4(t,y_4)=&0.5(y_{4,a}{-}\cos{t}{-}2)^2{+}(y_{4,b}{-}\cos{1.2t}{-}1)^2\\
	&{+}(y_{4,c}{-}\cos{1.5t}{-}2)^2;\\
	f_5(t,y_5)=&2(y_{5,a}{-}\cos{t}{-}1)^2{+}3(y_{5,b}{-}\cos{1.5t}{-}1)^2\\
	&{+}(y_{5,c}{-}\cos{2t}{-}1.5)^2.
\end{align*}

The feasible set of output variables $\mathcal{Y}\in[-1,5]^{12}$. The constraints are defined by a time-varying function
\begin{align*}
	g_1(t,y_1)=&(0.5\sin{10t}{+}1.5)y_{1,a}{+}(0.5\sin{15t}{+}1.5)y_{1,b}{-}1;\\
	g_1(t,y_2)=&(0.3\sin{10t}{+}1.7)y_{2,a}{+}(0.1\sin{25t}{+}1.9)y_{2,b}{-}3;\\
	g_1(t,y_3)=&(0.6\sin{20t}{+}1.4)y_{3,a}{+}(0.5\sin{15t}{+}1.5)y_{3,b}{-}4;\\
	g_1(t,y_4)=&(0.5\sin{20t}{+}1.5)y_{4,a}{+}(0.4\sin{25t}{+}1.6)y_{4,b}\\
	&{+}(0.4\sin{25t}{+}1.6)y_{4,c}{-}2;\\
	g_1(t,y_5)=&(0.5\sin{10t}{+}1.5)y_{5,a}{+}(0.6\sin{15t}{+}1.4)y_{5,b}\\
	&{+}(0.6\sin{15t}{+}1.4)y_{5,c}{-}5;
\end{align*}
 The above constraint selection ensures that $\boldsymbol{0}$ must be a strictly feasible solution for all $t\in[0,T]$.

The clairvoyant optimal output can be computed by solving the problem
\begin{align}
	\begin{split}
		y^* \!:= \mathop{\arg\min}_{y \in \mathcal{Y}} \int_{0}^{T} \sum_{i=1}^{N}f_i(t,y_i)\,dt ,\\
		s.t. \sum_{i=1}^{N}g_{i}(t, y_i)\le \boldsymbol0.\label{question4}
	\end{split}
\end{align}

The communication network among these agents is depicted in Fig. \ref{pic1}.
It can be verified that Assumptions \ref{asp1}$-$\ref{asp4} hold.

For the numerical example, the selection of feedback matrices is based on (\ref{eqle3}), where $K_{\alpha_{1,2}}$ $=$ $[-3,2;1,0]$, $K_{\alpha_{3}}$ $=$ $[-1,1;2,0]$, $K_{\alpha_{4,5}}$ $=$ $[2,1,0;0,1,1;1,0,2]$, $K_{\beta_{1,2}}$ $=$ $[-1.5,1;0.5,0]$, $K_{\beta_{3}}$ $=$ $[1,2;-2,-5]$, $K_{\beta_{4,5}}$ $=$ $[0.333,0,0;0,1,0;0,-0.5,0.5]$. The initial values $x_i(0)$ are randomly selected in $[-5, 5]$ and $\mu(0)=\boldsymbol{0}$.

\begin{figure}[tbp]
	\centering
	\includegraphics[width=0.45\textwidth]{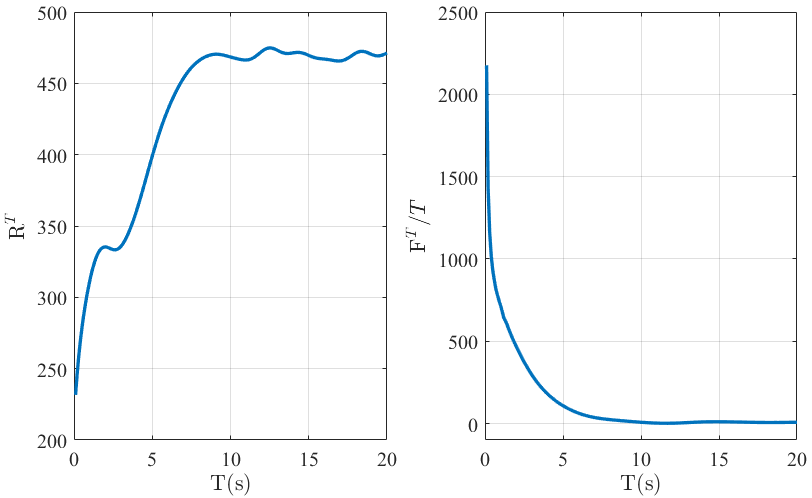} 
	\caption{Evolution of $\mathcal{R}^T$ and $\mathcal{F}^T/T$ with event-triggered communication.}
	\label{pic4}
\end{figure}

\begin{figure}[tbp]
	\centering
	\includegraphics[width=0.4\textwidth]{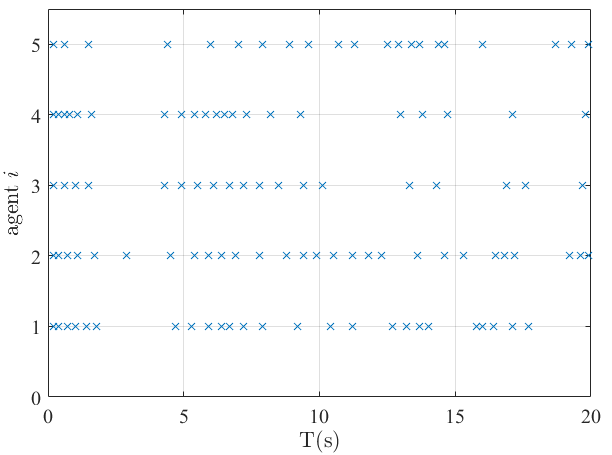} 
	\caption{ Triggering instants of five agents.}
	\label{pic5}
\end{figure}

Fig. \ref{pic3} illustrates that the continuous-time control law achieves constant regret bound and sublinear fit bound. The results are in accordance with those established in Theorems \ref{th1}-\ref{th2}. Likewise, the similar results can be observed in Fig. \ref{pic4} under event-triggered communication. Fig. \ref{pic5} shows the communication moments of five agents with event-triggered control laws, from which one can observe that the communication among five agents is discrete and exhibits no Zeno behavior.

\section{CONCLUSION}\label{res}
In this paper, we studied distributed online convex optimization for heterogeneous linear multi-agent systems with time-varying cost functions and time-varying coupling inequality constraints. A distributed controller was proposed based on the saddle-point method, showing the constant regret bound and sublinear fit bound.  In order to avoid continuous communication and reduce the communication cost, an event-triggered communication scheme with no Zeno behavior was developed, which also achieves constant regret bound and sublinear fit bound.

\bibliographystyle{ieeetr}
\bibliography{bibfile}  

\end{document}